\date{}
\newcommand{\executeiffilenewer}[3]{%
 \ifnum\pdfstrcmp{\pdffilemoddate{#1}}%
 {\pdffilemoddate{#2}}>0%
 {\immediate\write18{#3}}\fi%
}
\newcommand{%
 \input{.pdf_tex}%
}[1]{%
 \input{#1.pdf_tex}%
}
\newtheorem{theorem}{Theorem}[section]
\newtheorem{proposition}[theorem]{Proposition}
\newenvironment{proof}[1][Proof]{\begin{trivlist}
\item[\hskip \labelsep {\bfseries #1}]}{\end{trivlist}}
\newcommand{\qed}{\nobreak \ifvmode \relax \else
      \ifdim\lastskip<1.5em \hskip-\lastskip
      \hskip1.5em plus0em minus0.5em \fi \nobreak
      \vrule height0.5em width0.5em depth0.25em\fi}
\newcommand{\TT}{\ensuremath{\mathsf{\tiny{T}}}}
\newcommand{\T}{^{\TT}}
\newcommand{\diff}[1][]{\mathrm{d}#1}
\newcommand{\dt}{\diff t }
\author{Michael Muehlebach and Raffaello D'Andrea
\thanks{Michael Muehlebach and Raffaello D'Andrea are with the Institute for Dynamic Systems and Control, ETH Zurich. The contact author is Michael Muehlebach, {\tt\small michaemu@ethz.ch}.
This work was supported by ETH-Grant ETH-48 15-1.}%
}
\title{\LARGE \bf
Approximation of Continuous-Time Infinite-Horizon Optimal Control Problems Arising in Model Predictive Control - Supplementary Notes}
\begin{document}

\maketitle

\begin{abstract}
These notes present preliminary results regarding two different approximations of linear infinite-horizon optimal control problems arising in model predictive control. 
Input and state trajectories are parametrized with basis functions and a finite dimensional representation of the dynamics is obtained via a Galerkin approach. It is shown that the two
approximations provide lower, respectively upper bounds on the optimal cost of the underlying infinite dimensional optimal control problem. These bounds get tighter as the number of basis functions is increased. In addition, conditions guaranteeing convergence to the cost of the underlying problem are provided.
\end{abstract}

\section{Introduction}
Model predictive control (MPC) takes input and state constraints fully into account and is therefore a promising control strategy with various applications. The standard MPC approach relies on discrete dynamics and a finite prediction horizon, which leads inevitably to issues related to closed-loop stability. In \cite{parametrizedMPC}, an approximation of the underlying infinite dimensional infinite-horizon optimal control problem has been proposed, which is based on a parametrization of input and state trajectories with basis functions. The infinite prediction horizon is maintained, and therefore closed-loop stability and recursive feasibility arise naturally from the problem formulation. Moreover, it is conjectured that the underlying infinite dimensional optimization problem is well-approximated even with a low basis function complexity.

Herein, we compare the approach from \cite{parametrizedMPC} to a different finite dimensional approximation. We analyze both with respect to convergence of the optimal costs as the number of basis functions is increased. In particular, the optimal cost of the approximation given in \cite{parametrizedMPC} decreases monotonically and approaches the cost of underlying infinite dimensional problem from above. It is shown that the corresponding optimal trajectories are guaranteed to converge and that the second approximation approaches the optimal cost of the infinite dimensional problem from below. In addition, we will establish conditions guaranteeing convergence of both approximations to the cost of the underlying infinite dimensional problem.

This report focuses on the technical proofs and complements \cite{parametrizedCDC}, where the underlying ideas are discussed in detail and a numerical example is provided.
\section{Problem Formulation}\label{Sec:ProbForm}
We present and analyze two approximations of the following optimal control problem,
\begin{align}
\begin{split}
J_\infty:=&\inf \frac{1}{2} ||x||_2^2 + \frac{1}{2} ||u||_2^2 \\
&\text{s.t.}~\dot{x}(t)=A x(t)+B u(t),~x(0)=x_0,\\
& \quad x(t)\in \mathcal{X}, \quad u(t) \in \mathcal{U}, \quad \forall t\in [0,\infty),\\
& \quad x \in L^2_n, \quad u \in L^2_m, \quad \dot{x} \in L^2_n,
\end{split}\label{eq:Jinf}
\end{align}
where $\mathcal{X}$ and $\mathcal{U}$ are closed and convex subsets of $\mathbb{R}^n$ and $\mathbb{R}^m$, respectively, containing $0$; the space of square integrable functions mapping from $[0,\infty)$ to $\mathbb{R}^q$ is denoted by $L^2_q$, where $q$ is a positive integer; and the $L_q^2$-norm is defined as
\begin{equation}
L^2_q \rightarrow \mathbb{R}, \quad x \rightarrow ||x||_2^2:=\int_{0}^{\infty} x\T x~\dt,
\end{equation}
where $\dt$ denotes the Lebesgue measure. We assume that $J_\infty$ is finite and that the corresponding minimizers, $x$ and $u$, are unique.

Input and state trajectories will be approximated as a linear combination of basis functions $\tau_i \in L^2_1$, $i=1,2,\dots$, that is
\begin{align}
\begin{split}
\tilde{x}^s(t, \eta_x):=(I_n \otimes \tau^s(t))\T \eta_x, \\ \tilde{u}^s(t, \eta_u):=(I_m \otimes \tau^s(t))\T \eta_u,
\end{split}
\end{align}
where $\otimes$ denotes the Kronecker product, $\eta_x\in \mathbb{R}^{ns}$ and $\eta_u\in \mathbb{R}^{ms}$ are the parameter vectors, and $\tau^s(t):=(\tau_1(t),\tau_2(t),\dots, \tau_s(t))\in \mathbb{R}^s$. In order to simplify notation we will omit the superscript $s$ in $\tau^s$, $\tilde{x}^s$, and $\tilde{u}^s$, and simply write $\tau$, $\tilde{x}$, and $\tilde{u}$ whenever the number of basis functions is clear from the context. Similarly, the dependence of $\tilde{x}$ and $\tilde{u}$ on $\eta_x$ and $\eta_u$ is frequently omitted. Without loss of generality we assume that the basis functions are orthonormal. Note that orthonormal basis functions can be constructed with the Gram-Schmidt procedure, \cite[p.~50]{CourantHilberMethods}.

As motivated in \cite{parametrizedMPC}, the following additional assumptions on the basis functions are made:
\begin{itemize}
\item[A1)] They are linearly independent.
\item[A2)] They fulfill $\dot{\tau}(t)=M \tau(t)$ for all $t\in [0,\infty)$, for some matrix $M \in \mathbb{R}^{s \times s}$. The eigenvalues of $M$ have strictly negative realparts.
\end{itemize}

%

\subsection{Resulting optimization problems}
In \cite{parametrizedMPC}, the following finite dimensional approximation of the original problem \eqref{eq:Jinf} is introduced,
\begin{align}
\begin{split}
J_s :=&\inf \frac{1}{2} ||\tilde{x}||_2^2 + \frac{1}{2} ||\tilde{u}||_2^2 \\
&\text{s.t.} \int_{0}^{\infty} (I_n \otimes \tau) \left(A \tilde{x} + B \tilde{u} - \dot{\tilde{x}} \right) \dt=0, \\
&\quad \tilde{x}(0)-x_0=0, \quad \eta_x \in \mathcal{X}^s, \eta_u \in \mathcal{U}^s.
\end{split}\label{eq:Prob1}
\end{align}
Note that the subscript $s$ refers to the number of basis functions used. More precisely, the optimization problem with optimal cost $J_s$ corresponds to the case where input and state trajectories are spanned by the first $s$ basis functions. The trajectories $\tilde{x}$ and $\tilde{u}$, which satisfy the equality constraint, fulfill the equations of motion exactly, that is, $A \tilde{x}(t) + B \tilde{u}(t) =\dot{\tilde{x}}(t)$ for all $t\in [0,\infty)$ and $\tilde{x}(0)=x_0$, see \cite{parametrizedMPC}. This is needed to guarantee that the minimizers of \eqref{eq:Prob1} achieve the cost $J_s$ on the nominal system. We make the following assumptions on the sets ${\mathcal{X}}^s$ and ${\mathcal{U}}^s$\footnote{The assumptions are listed for the state constraints $\mathcal{X}$ and are analogous for the input constraints $\mathcal{U}$.}
\begin{itemize}
\item[B0)] $\mathcal{X}^s$ is closed and convex
\item[B1)] $i_s(\mathcal{X}^s) \subset \mathcal{X}^{s+1}$
\item[B2)] $\eta_x \in \mathcal{X}^s$ implies $(I_n \otimes \tau(t))\T \eta_x \in \mathcal{X}$ for all $t\in [0,\infty)$,
\end{itemize}
where the inclusion $i_s$, mapping from $\mathbb{R}^{ns}$ to $\mathbb{R}^{n(s+1)}$ is defined by
\begin{align}
\tilde{x}^s(t,\eta_x)=\tilde{x}^{s+1}(t,i_s(\eta_x)),
\forall t\in [0,\infty),~\forall \eta_x \in \mathbb{R}^{ns}.
\label{eq:defi_s}
\end{align}
Assumption B0) implies that the optimization problem \eqref{eq:Prob1} is convex, and that corresponding minimizers exist, provided the existence of feasible trajectories. Assumption B1) is used to show that the cost $J_s$ is monotonically decreasing in $s$, whereas Assumption B2) implies that $J_s$ is bounded below by $J_\infty$, see Sec.~\ref{Sec:MainResults}. In the context of MPC, Assumption B2) guarantees recursive feasibility and closed-loop stability, \cite{parametrizedMPC}. In addition, the cost $J_s$ is achieved on the nominal system, as the resulting input and state trajectories respect input and state constraints and fulfill the dynamics exactly.

The following alternative approximation is introduced,
\begin{align}
\begin{split}
\tilde{J}_s :=& \inf \frac{1}{2} ||\tilde{x}||_2^2 + \frac{1}{2} ||\tilde{u}||_2^2  \\
&~\text{s.t.} \int_{0}^{\infty} (I_n \otimes \tau) \left( A \tilde{x} + B \tilde{u} - \dot{\tilde{x}} \right) \dt 
- (I_n \otimes \tau(0)) \left(  \tilde{x}(0)-x_0 \right) = 0,\\
& \qquad \eta_x \in \tilde{\mathcal{X}}^s, \eta_u \in \tilde{\mathcal{U}}^s,
\end{split}\label{eq:Prob2}
\end{align}
whose purpose is to provide a monotonically increasing sequence $\tilde{J}_s$ bounding $J_\infty$ from below. To that extent, the following assumptions on the sets $\tilde{\mathcal{X}}^s$ and $\tilde{\mathcal{U}}^s$ are made:\footnote{The assumptions are again listed for the state constraints $\mathcal{X}$ and are analogous for the input constraints $\mathcal{U}$.}
\begin{itemize}
\item[C0)] $\tilde{\mathcal{X}}^s$ is closed and convex.
\item[C1)] $\pi_s(\tilde{\mathcal{X}}^{s+1}) \subset \tilde{\mathcal{X}}^s$.
\item[C2)] For each $x \in L^2_n$ with $x(t) \in \mathcal{X}$ for all $t\in [0,\infty)$ it holds that $\pi^s(x) \in \tilde{\mathcal{X}}^s$,
\end{itemize}
where the projections $\pi_s$ and $\pi^s$ are defined as
\begin{align}
\pi^s: &L^2_n \rightarrow \mathbb{R}^{ns}, x \rightarrow \int_{0}^{\infty} (I_n \otimes \tau^s) x~\dt,\label{eq:defpi^s}\\
\pi_s: &\mathbb{R}^{n(s+1)} \rightarrow \mathbb{R}^{ns},~\eta_x \rightarrow \pi^s\left(\tilde{x}^{s+1}(\cdot,\eta_x)\right).
\label{eq:defpi_s}
\end{align}
Assumption C0) ensures that the optimization problem \eqref{eq:Prob2} is convex, and that corresponding minimizers exist, provided the existence of feasible trajectories. Assumption C1) is used to demonstrate that $\tilde{J}_s$ is monotonically increasing, whereas Assumption C2) implies that $\tilde{J}_s$ is bounded above by $J_\infty$, see Sec.~\ref{Sec:MainResults}. Examples fulfilling Assumptions B0)-B2) and C0)-C2) are provided in \cite{parametrizedCDC}.
\section{Main results}\label{Sec:MainResults}
In the following we will analyze the two approximations \eqref{eq:Prob1} and \eqref{eq:Prob2} and prove the following result:
\begin{theorem}\label{Thm:Main}
Let $N_0$ be such that $J_{N_0}$ is finite.\\
1) If Assumptions B0), B1), and B2) hold, then the sequence $J_s$ is monotonically decreasing for $s \geq N_0$, converges as $s\rightarrow \infty$, and is bounded below by $J_\infty$. The corresponding optimizers $\tilde{x}$ and $\tilde{u}$ converge strongly in $L^2_n$, respectively $L^2_m$ as $s\rightarrow \infty$.\\
2) If Assumptions C0), C1), and C2) are fulfilled, then 
$\tilde{J}_s$ is monotonically increasing for $s\geq 1$, converges as $s \rightarrow \infty$, and is bounded above by $J_\infty$.
\end{theorem}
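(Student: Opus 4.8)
The plan is to treat both parts under a single philosophy: realise each finite-dimensional problem as the minimisation of the squared $L^2$-norm over a closed convex feasible set in the Hilbert space $H:=L^2_n\times L^2_m$, and to exhibit the explicit maps $i_s$ and $\pi_s$ relating the feasible sets at levels $s$ and $s+1$.

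For Part 1 I would first record the fact asserted in Sec.~\ref{Sec:ProbForm} that any feasible point of \eqref{eq:Prob1} satisfies the dynamics exactly: since $\dot{\tilde x}$ lies in $\mathrm{span}(\tau_1,\dots,\tau_s)$ by A2, as do $A\tilde x$ and $B\tilde u$, the residual $A\tilde x+B\tilde u-\dot{\tilde x}$ lies in the test space, so the Galerkin condition forces it to vanish pointwise; together with $\tilde x(0)=x_0$ this gives the nominal ODE. Monotonicity then follows by embedding: given a level-$s$ feasible pair, the inclusion $i_s$ of \eqref{eq:defi_s} preserves the trajectory, hence preserves exact dynamics, initial condition and cost, and by B1) keeps the parameters in the constraint sets, so the embedded pair is feasible for \eqref{eq:Prob1} at level $s+1$ with equal cost, giving $J_{s+1}\le J_s$. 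For the lower bound, since the trajectories and their derivatives are finite combinations of the $L^2$-functions $\tau_i,\dot\tau_i$, any level-$s$ feasible pair is admissible for \eqref{eq:Jinf} — exact dynamics and initial condition hold, and B2) supplies the pointwise constraints — so $J_\infty\le J_s$. Monotone and bounded below yields convergence of the real sequence $J_s$.

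The main technical step of Part 1 is the strong convergence of the optimisers. Writing $p_s\in H$ for the unique minimiser of $\tfrac12||\cdot||_2^2$ over the level-$s$ feasible set $\mathcal F_s$, the embedding above shows $\mathcal F_s\subset\mathcal F_{s+1}$ as nested closed convex subsets of $H$, so $p_s$ is the projection of the origin onto $\mathcal F_s$ and $||p_s||_2^2=2J_s$ is nonincreasing. For $t>s$ we have $p_s\in\mathcal F_s\subset\mathcal F_t$, hence $\tfrac12(p_s+p_t)\in\mathcal F_t$ and $||\tfrac12(p_s+p_t)||_2\ge||p_t||_2$; the parallelogram identity then gives $||p_s-p_t||_2^2\le 2||p_s||_2^2-2||p_t||_2^2\to 0$, so $(p_s)$ is Cauchy and converges strongly, i.e. $\tilde x$ and $\tilde u$ converge strongly in $L^2_n$, respectively $L^2_m$. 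I expect this Cauchy estimate to be the crux; the remainder is bookkeeping.

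For Part 2 the key preliminary is to rewrite the constraint of \eqref{eq:Prob2} in weak form. Integrating the term $\int_0^\infty\tau_i\dot{\tilde x}\,\dt$ by parts and using that $\tilde x(t)\to 0$ as $t\to\infty$ (A2, negative real parts) converts the constraint into $\int_0^\infty(\dot\tau_i\tilde x+\tau_i(A\tilde x+B\tilde u))\,\dt+\tau_i(0)x_0=0$ for $i=1,\dots,s$, i.e. the weak formulation of $\dot{\tilde x}=A\tilde x+B\tilde u$, $\tilde x(0)=x_0$ tested against the first $s$ basis functions. Monotonicity is then shown by projection: applying $\pi_s$ of \eqref{eq:defpi_s} to the level-$(s+1)$ minimiser drops the $\tau_{s+1}$-component of each trajectory. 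Because $\mathrm{span}(\tau_1,\dots,\tau_s)$ is invariant under differentiation (A2 at level $s$, so $\dot\tau_i\in\mathrm{span}(\tau_1,\dots,\tau_s)$ for $i\le s$) and the basis is orthonormal, the discarded component contributes nothing to the weak constraints indexed by $i\le s$; hence the projected trajectory still satisfies the level-$s$ constraint, while C1) keeps its parameters in $\tilde{\mathcal X}^s,\tilde{\mathcal U}^s$ and orthonormality gives $||\pi_s(\cdot)||_2\le||\cdot||_2$, so the cost does not increase, yielding $\tilde J_s\le\tilde J_{s+1}$. The upper bound follows identically: projecting the minimiser $(x,u)$ of \eqref{eq:Jinf} onto the first $s$ basis functions via $\pi^s$ of \eqref{eq:defpi^s} preserves the weak constraints by the same orthogonality argument, lands in $\tilde{\mathcal X}^s,\tilde{\mathcal U}^s$ by C2), and can only decrease the norm, so $\tilde J_s\le J_\infty$; monotone and bounded above gives convergence. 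The delicate point is the weak reformulation combined with the differentiation-invariance of the basis spans, which is precisely what makes $\pi_s$ constraint-preserving.
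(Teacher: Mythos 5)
Your proposal is correct, and for Part 1 it is essentially the paper's argument in different clothing: the paper obtains the Cauchy estimate $||\tilde{x}^s-\tilde{x}^{s+1}||_2^2+||\tilde{u}^s-\tilde{u}^{s+1}||_2^2\leq 4(J_s-J_{s+1})$ from strong convexity of the quadratic objective applied to the convex combination $\lambda i_s(\eta_{x_s})+(1-\lambda)\eta_{x_{s+1}}$ with $\lambda=1/2$, which is exactly your parallelogram identity for the projection of the origin onto the nested sets $\mathcal{F}_s\subset\mathcal{F}_{s+1}$; if anything your version, stated for arbitrary $s<t$ rather than consecutive indices, makes the Cauchy conclusion slightly more immediate. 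The genuine divergence is in Part 2. For monotonicity of $\tilde{J}_s$ the paper passes to the Lagrange dual of \eqref{eq:Prob2}, shows that the dual optimizers satisfy the adjoint equations exactly and hence embed via $i_s$ into feasible dual candidates at level $s+1$, and then invokes the order-reversing property of convex conjugation together with C1) to compare $I_{\varphi_s}^*$ with $I_{\varphi_{s+1}}^*$. You instead argue purely on the primal side: rewrite the constraint via integration by parts into the form \eqref{eq:cond1} (which the paper itself derives in the proof of Prop.~\ref{Prop:BLB}), observe that A1) and A2) holding at every level force $M_{s+1}$ to have $M_s$ as its leading block with zeros in the last column of the first $s$ rows, so that applying $\pi_s$ to the level-$(s+1)$ minimizer preserves the constraint, lands in $\tilde{\mathcal{X}}^s\times\tilde{\mathcal{U}}^s$ by C1), and can only decrease the cost, giving $\tilde{J}_s\leq\tilde{J}_{s+1}$ directly. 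This primal projection argument is more elementary and avoids strong duality and conjugate functions entirely; what the paper's route buys is the explicit dual problem \eqref{eq:finiteDual} and the observation that the adjoint equations are satisfied exactly, which the authors use elsewhere, but as a proof of monotonicity your route is complete and simpler. Your treatment of the upper bound $\tilde{J}_s\leq J_\infty$ (project the minimizer of \eqref{eq:Jinf} by $\pi^s$, check the weak constraint by orthogonality, apply C2) and Bessel) coincides with the paper's Prop.~\ref{Prop:BLB}.
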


We start by summarizing the results from \cite{parametrizedMPC}, stating that the optimal cost of \eqref{eq:Prob1} is a monotonically decreasing sequence providing an upper bound on the optimal cost of \eqref{eq:Jinf}.
\begin{proposition}\label{Prop:AMonDec}
Let $N_0$ be such that $J_{N_0}$ is finite. If Assumptions B0) and B1) are fulfilled, then the sequence $J_s$ is monotonically decreasing for $s \geq N_0$ and converges as $s \rightarrow \infty$. If Assumptions B0) and B2) are fulfilled, $J_s$ is bounded below by $J_\infty$.
\end{proposition}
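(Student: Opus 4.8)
The plan is to prove all three assertions through cost-preserving embeddings of the feasible sets, taking as the point of departure the structural fact established in \cite{parametrizedMPC}: any $(\eta_x,\eta_u)$ satisfying the equality constraint and initial condition of \eqref{eq:Prob1} produces trajectories that satisfy the dynamics pointwise, i.e. $A\tilde{x}(t)+B\tilde{u}(t)=\dot{\tilde{x}}(t)$ for all $t$, together with $\tilde{x}(0)=x_0$. This holds because $A\tilde{x}+B\tilde{u}-\dot{\tilde{x}}$ lies componentwise in the span of $\tau_1,\dots,\tau_s$ (using $\dot{\tau}=M\tau$ from A2 for the $\dot{\tilde{x}}$ term), while the Galerkin condition forces it to be orthogonal to every $\tau_i$; by orthonormality it must therefore vanish identically. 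I would cite this rather than reprove it, and use it as the workhorse for both the monotonicity and the lower-bound arguments.

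For the monotonicity claim I would take any feasible $(\eta_x,\eta_u)$ for the $s$-dimensional problem and push it forward through the inclusion $i_s$ from \eqref{eq:defi_s} (and its analogue for the input). By the defining property of $i_s$ the trajectories $\tilde{x}$ and $\tilde{u}$ are left unchanged, so the cost is preserved exactly. It then remains only to verify feasibility for the $(s+1)$-dimensional problem: the initial condition is inherited because the trajectory is unchanged; the equality constraint holds since the dynamics are satisfied pointwise, so the integral tested against $(I_n\otimes\tau^{s+1})$ again vanishes; and the set memberships $i_s(\eta_x)\in\mathcal{X}^{s+1}$, $i_s(\eta_u)\in\mathcal{U}^{s+1}$ are precisely Assumption B1). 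Applying this to a minimizer of the $s$-problem (which exists for $s\geq N_0$ since $J_{N_0}$ is finite and, by convexity from B0), feasibility propagates) yields $J_{s+1}\leq J_s$, and an induction shows $0\le J_s\le J_{N_0}<\infty$ for all $s\geq N_0$.

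Convergence is then immediate and, importantly, requires neither B2) nor the lower bound by $J_\infty$: the objective is a sum of squared $L^2$-norms, hence $J_s\geq 0$, and a monotonically decreasing sequence that is bounded below converges. I would state this explicitly to separate it from the sharper lower bound. For that lower bound, under B0) and B2), I would start from a feasible $(\eta_x,\eta_u)$ of the $s$-problem (if none exists, $J_s=+\infty$ and the bound is trivial). Its trajectories satisfy the dynamics exactly and the initial condition; Assumption B2) guarantees $\tilde{x}(t)\in\mathcal{X}$ and $\tilde{u}(t)\in\mathcal{U}$ for all $t$; and $\tilde{x}\in L^2_n$, $\tilde{u}\in L^2_m$, $\dot{\tilde{x}}\in L^2_n$ follow because each is a finite linear combination of the $L^2$ basis functions, with $\dot{\tilde{x}}$ expressed through $M$ via A2). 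Hence $(\tilde{x},\tilde{u})$ is admissible for the infinite-horizon problem \eqref{eq:Jinf}, so its cost is at least $J_\infty$; since this holds for every feasible point of the $s$-problem, $J_s\geq J_\infty$.

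The routine work here is uniformly light, so the main obstacle is not a hard estimate but careful bookkeeping around the one fact that drives everything: the pointwise satisfaction of the dynamics, which underlies both the equality-constraint check in the monotonicity step and the admissibility check in the lower-bound step. The two points I would be most careful to justify cleanly are that $i_s$ genuinely preserves the trajectory (and therefore the cost and the initial condition, not merely the constraint-set membership) and that the $L^2$-membership of $\dot{\tilde{x}}$ is a consequence of Assumption A2) rather than an independent hypothesis.
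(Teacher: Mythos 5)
Your proof is correct and follows exactly the route the paper intends: the paper itself only cites \cite{parametrizedMPC} for this proposition, but its surrounding discussion (B1) for monotonicity via the inclusion $i_s$, B2) plus exact pointwise satisfaction of the dynamics for the lower bound $J_s \geq J_\infty$) and the proof of Prop.~\ref{Prop:AStrongConv} rely on precisely the ingredients you use. Your two flagged points of care --- that $i_s$ preserves the trajectory itself and that $\dot{\tilde{x}} \in L^2_n$ follows from A2) --- are the right ones, and the argument is complete.
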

\begin{proof} See \cite{parametrizedMPC}.  \hfill \qed \end{proof}
%

The fact that $J_s$ converges can be used to demonstrate convergence of the optimizer $\tilde{x}^s$ and $\tilde{u}^s$, as well as the corresponding parameters $\eta_{x_s}$ and $\eta_{u_s}$. Therefore, the parameter vectors $\eta_{x_s}$ and $\eta_{u_s}$ are interpreted as square summable sequences, i.e. as elements in $\ell^2$.\footnote{The set of square summable sequences is denoted by $\ell^2$.}
\begin{proposition}\label{Prop:AStrongConv}
Let $N_0$ be such that $J_{N_0}$ is finite and let Assumptions B0) and B1) be fulfilled. Then, the minimizers of \eqref{eq:Prob1} converge (strongly) in $\ell^2$ as $s \rightarrow \infty$, and the corresponding trajectories $\tilde{x}^s$ and $\tilde{u}^s$ converge (strongly) in $L^2_n$, respectively $L^2_m$.
\end{proposition}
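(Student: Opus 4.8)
The plan is to recognize each minimizer of \eqref{eq:Prob1} as the minimum-norm element of a nested family of closed convex sets, and then to use the parallelogram law to upgrade the convergence of $J_s$ (Proposition \ref{Prop:AMonDec}) into \emph{strong} convergence of the minimizers. First I would exploit the orthonormality of the basis functions to identify the cost with a squared Euclidean norm: since $\int_0^\infty \tau^s(t)\tau^s(t)\T \dt = I_s$, the isometries $||\tilde{x}^s||_2^2 = \|\eta_{x_s}\|^2$ and $||\tilde{u}^s||_2^2 = \|\eta_{u_s}\|^2$ hold (with $\|\cdot\|$ the Euclidean norm), so that $J_s = \tfrac12 ( \|\eta_{x_s}\|^2 + \|\eta_{u_s}\|^2 )$. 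Identifying the parameter vectors with their zero-padded images in $\ell^2$ under the inclusions $i_s$ — which is norm preserving — the minimizer $z_s := (\eta_{x_s},\eta_{u_s})$ is precisely the element of smallest norm in the feasible set $F_s \subset \ell^2 \times \ell^2$, i.e. the projection of the origin onto $F_s$, and $\|z_s\|^2 = 2 J_s$.

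Next I would show that the embedded feasible sets are nested, $F_s \subset F_{s+1}$; this is the step demanding the most care. A trajectory feasible for \eqref{eq:Prob1} at level $s$ satisfies the dynamics exactly and the initial condition (as noted following \eqref{eq:Prob1}); its representation is unchanged under $i_s$ and $\dot{\tilde{x}}$ remains in the span of the first $s+1$ basis functions, so it still satisfies the level-$(s+1)$ equality constraint, while Assumption B1) and its analogue for $\mathcal{U}$ preserve the memberships $\eta_x \in \mathcal{X}^s$, $\eta_u \in \mathcal{U}^s$. With Assumption B0), every $F_s$ is thus closed and convex and $F_s \subset F_{s+1}$.

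The core of the argument is then the parallelogram law. For $m \le n$ we have $z_m \in F_m \subset F_n$, so $\tfrac12(z_m + z_n) \in F_n$ by convexity, and minimality of $z_n$ in $F_n$ gives $\|\tfrac12(z_m + z_n)\| \ge \|z_n\|$. Hence
\[
\|z_m - z_n\|^2 = 2\|z_m\|^2 + 2\|z_n\|^2 - 4\big\|\tfrac12(z_m + z_n)\big\|^2 \le 2\|z_m\|^2 - 2\|z_n\|^2 = 4 (J_m - J_n).
\]
Since $J_s$ converges it is Cauchy, so the right-hand side tends to $0$ and $\{z_s\}$ is Cauchy in $\ell^2 \times \ell^2$, hence converges strongly by completeness. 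Finally, because the correspondence between $\ell^2$ coefficients and functions through the orthonormal system $\{\tau_i\}$ is a linear isometry, $||\tilde{x}^s - \tilde{x}^{s'}||_2 = \|\eta_{x_s} - \eta_{x_{s'}}\|$ and likewise for the inputs, so the strong $\ell^2$ convergence of the parameters transfers directly to strong convergence of $\tilde{x}^s$ in $L^2_n$ and $\tilde{u}^s$ in $L^2_m$.

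The main obstacle is precisely this upgrade from weak to strong convergence: boundedness of $\{z_s\}$ (from the convergence of $J_s$) only yields a weakly convergent subsequence for free, and it is the nested structure together with the parallelogram identity that forces the whole sequence to be Cauchy. Consequently the load-bearing preliminaries are verifying the inclusion $F_s \subset F_{s+1}$ and the norm-preserving identification $J_s = \tfrac12 \|z_s\|^2$, both of which hinge on the orthonormality of the basis functions and on Assumptions B0)--B1).
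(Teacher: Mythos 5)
Your proposal is correct and is essentially the paper's argument in different clothing: the paper forms the convex combination $\lambda\, i_s(\eta_{x_s}) + (1-\lambda)\eta_{x_{s+1}}$, shows it is feasible at level $s+1$ via B0), B1) and the exact satisfaction of the dynamics, and invokes strong convexity of the quadratic cost at $\lambda = 1/2$ — which is precisely your parallelogram-law computation — to obtain the same key bound $\|z_m - z_n\|^2 \le 4(J_m - J_n)$ and conclude via Cauchy completeness. The only (cosmetic) differences are that you phrase the minimizer as the metric projection of the origin onto nested feasible sets and pass from parameters to trajectories by isometry, whereas the paper argues on the trajectories first and recovers the parameters via Bessel's inequality; your comparison of general indices $m \le n$ rather than only consecutive ones is in fact the slightly cleaner way to get the Cauchy property.
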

\begin{proof}
We fix $s \geq N_0$ and denote the minimizers corresponding to $J_s$ by $\eta_{x_s}$, $\eta_{u_s}$, and the minimizers corresponding to $J_{s+1}$ by $\eta_{x_{s+1}}$, $\eta_{u_{s+1}}$, which we consider to be elements of $\ell^2$. The following observation can be made: The vectors $\eta_{x}:=\lambda i_s(\eta_{x_s}) + (1-\lambda) \eta_{x_{s+1}}$ and $\eta_{u}:=\lambda i_s(\eta_{u_s}) + (1-\lambda) \eta_{u_{s+1}}$ with $\lambda \in [0,1]$ are feasibly candidates for the optimization problem \eqref{eq:Prob1} over $s+1$ basis functions.\footnote{Notation is slightly abused, since $i_s$ is used to denote both the inclusion $\mathbb{R}^{ns} \rightarrow \mathbb{R}^{n(s+1)}$ and $\mathbb{R}^{ms} \rightarrow \mathbb{R}^{m(s+1)}$, which is defined in analogy to \eqref{eq:defi_s}.} This is because the constraints $i_s(\eta_{x}) \in \mathcal{X}^{s+1}$, $i_s(\eta_u) \in \mathcal{U}^{s+1}$ are satisfied by Assumptions B0) (convexity) and B1). Moreover, as the dynamics are fulfilled exactly it follows for $\tilde{x}:=(I_n \otimes \tau)\T \eta_x$ and $\tilde{u}:=(I_m \otimes \tau)\T \eta_u$ that
\begin{align*}
\tilde{x}(t)&=\lambda \tilde{x}^s(t) + (1-\lambda) \tilde{x}^{s+1}(t)\\
&=e^{At} x_0 + \int_{0}^{t} e^{A(t-\hat{t})} B (\lambda \tilde{u}^s(\hat{t})+ (1-\lambda) \tilde{u}^{s+1}(\hat{t})) \diff \hat{t}\\
&=e^{At} x_0 + \int_{0}^t e^{A(t-\hat{t})} B \tilde{u}(\hat{t}) \diff \hat{t},
\end{align*}
where $\tilde{x}^{s}:=(I_n \otimes \tau)\T \eta_{x_s}$, $\tilde{u}^s:=(I_n \otimes \tau)\T \eta_{u_s}$, $\tilde{x}^{s+1}:=(I_n \otimes \tau)\T \eta_{x_{s+1}}$, and $\tilde{u}^{s+1}:=(I_n \otimes \tau)\T \eta_{u_{s+1}}$, which concludes that the equality constraint is likewise fulfilled. Since $\eta_{x_{s+1}}$ and $\eta_{u_{s+1}}$ are the minimizers corresponding to $J_{s+1}$, we have that
\begin{equation}
J_{s+1} \leq \frac{1}{2} ||\tilde{x}||_2^2 + \frac{1}{2} ||\tilde{u}||_2^2,
\end{equation}
for all $\lambda \in [0,1]$.
The objective function is quadratic, and hence strongly convex with respect to the $L^2_n$ and $L^2_m$-norm, which leads to
\begin{align}
J_{s+1} \leq \lambda J_{s} + (1-\lambda)& J_{s+1} - \frac{1}{2} \lambda (1-\lambda) ||\tilde{x}^s-\tilde{x}^{s+1}||_2^2 - \frac{1}{2}\lambda (1-\lambda) ||\tilde{u}^s-\tilde{u}^{s+1}||_2^2,
\end{align}
for all $\lambda \in [0,1]$. We set $\lambda=1/2$ and obtain 
\begin{equation}
||\tilde{x}^s-\tilde{x}^{s+1}||_2^2 + ||\tilde{u}^s-\tilde{u}^{s+1}||_2^2 \leq 4 (J_s-J_{s+1}).
\end{equation}
According to Prop.~\ref{Prop:AMonDec}, the sequence $J_s$ converges as $s\rightarrow \infty$. As a result, it follows that $(\tilde{x}^s,\tilde{u}^s) \in L^2_n \times L^2_m$ is a Cauchy sequence. The space $L^2_1$ is a Banach space, \cite[p. 67]{rudinRealCAnalysis} and so is $L^2_n \times L^2_m$. Consequently, $(\tilde{x}^s, \tilde{u}^s)$ converges strongly as $s\rightarrow  \infty$, \cite[p. 4]{rudinFA}. The orthonormality of the basis functions implies by Bessel's inequality, \cite[p. 51]{CourantHilberMethods} that the parameters $\eta_{x_s}$ and $\eta_{u_s}$ form a Cauchy sequence in $\ell^2 \times \ell^2$. The square summable sequences form likewise a Banach space and therefore the parameter vectors $\eta_{x_s}$ and $\eta_{u_s}$ converge strongly in $\ell^2$ as $s\rightarrow \infty$. \hfill \qed
\end{proof}

We establish results for the optimization problem \eqref{eq:Prob2}, which are similar to the ones given by Prop.~\ref{Prop:AMonDec}. More precisely, we will show that under Assumptions C0)-C2) the optimal cost of \eqref{eq:Prob2} bounds $J_\infty$ from below and is monotonically increasing in $s$.


\begin{proposition}\label{Prop:BLB}
Let Assumptions C0) and C2) be fulfilled. Then $\tilde{J}_s \leq J_\infty$ holds for all $s \geq 1$.
\end{proposition}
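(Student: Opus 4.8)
The plan is to exhibit, for each fixed $s \geq 1$, a single feasible point of \eqref{eq:Prob2} whose cost is at most $J_\infty$; since $\tilde J_s$ is an infimum this immediately gives $\tilde J_s \leq J_\infty$. The natural candidate is the orthogonal projection of the (unique) optimal trajectories $x$ and $u$ of the infinite-dimensional problem \eqref{eq:Jinf} onto the span of the first $s$ basis functions. Concretely, I would set $\eta_x := \pi^s(x)$ and $\eta_u := \pi^s(u)$, so that the associated trajectories $\tilde x = \tilde{x}(\cdot,\eta_x)$ and $\tilde u = \tilde{u}(\cdot,\eta_u)$ are the componentwise orthogonal projections $Px$ and $Pu$, where $P$ denotes the orthogonal projection of $L^2_1$ onto $\mathrm{span}\{\tau_1,\dots,\tau_s\}$. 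Two things then have to be checked: that this candidate is feasible, and that its cost does not exceed $J_\infty$.

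Feasibility splits into the set constraints and the equality constraint. The set constraints are immediate from Assumption C2): since $x(t)\in\mathcal{X}$ and $u(t)\in\mathcal{U}$ for all $t$, with $x\in L^2_n$ and $u\in L^2_m$, we obtain $\eta_x=\pi^s(x)\in\tilde{\mathcal{X}}^s$ and $\eta_u=\pi^s(u)\in\tilde{\mathcal{U}}^s$. The equality constraint is the crux of the argument and the step I expect to be the main obstacle. Here I would exploit two facts. First, because the components of $A\tilde x + B\tilde u$ lie in $\mathrm{span}\{\tau_1,\dots,\tau_s\}$ and $P$ is self-adjoint with $P\tau_i=\tau_i$, the true dynamics $\dot x = Ax+Bu$ give
\[
\int_0^\infty (I_n \otimes \tau)(A\tilde x + B\tilde u)\,\dt = \int_0^\infty (I_n \otimes \tau)(A x + B u)\,\dt = \int_0^\infty (I_n \otimes \tau)\dot x\,\dt .
\]
Second, I would integrate $\int_0^\infty (I_n \otimes \tau)\dot{\tilde x}\,\dt$ by parts. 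Since $x,\dot x\in L^2_n$ forces $x(t)\to 0$ and each $\tau_i$ decays (Assumption A2, eigenvalues of $M$ with negative real part), the boundary terms at infinity vanish, and likewise for $\tilde x$. Using Assumption A2 in the form $\dot\tau=M\tau$ — so that each $\dot\tau_i$ lies in the span and is therefore fixed by $P$ — the interior integral satisfies $\int_0^\infty \dot\tau_i\,\tilde x_k\,\dt = \langle P\dot\tau_i,x_k\rangle = \langle\dot\tau_i,x_k\rangle = \int_0^\infty \dot\tau_i\,x_k\,\dt$. Comparing the integration by parts of $\dot{\tilde x}$ with that of $\dot x$, and using $x(0)=x_0$, then yields
\[
\int_0^\infty (I_n \otimes \tau)\dot{\tilde x}\,\dt = \int_0^\infty (I_n \otimes \tau)\dot x\,\dt - (I_n \otimes \tau(0))\big(\tilde x(0) - x_0\big) .
\]
Substituting both identities into the left-hand side of the equality constraint of \eqref{eq:Prob2} makes the two $\int (I_n \otimes \tau)\dot x\,\dt$ terms and the two boundary terms cancel exactly, so the constraint holds. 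The extra correction $-(I_n \otimes \tau(0))(\tilde x(0)-x_0)$ present in \eqref{eq:Prob2} is precisely what absorbs the initial-condition mismatch introduced by projecting, which is why the unrelaxed constraint of \eqref{eq:Prob1} would fail for this same candidate.

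Finally, the cost bound is the easy step: as $P$ is an orthogonal projection it is norm non-increasing, whence
\[
\tfrac{1}{2}||\tilde x||_2^2 + \tfrac{1}{2}||\tilde u||_2^2 = \tfrac{1}{2}||P x||_2^2 + \tfrac{1}{2}||P u||_2^2 \leq \tfrac{1}{2}||x||_2^2 + \tfrac{1}{2}||u||_2^2 = J_\infty .
\]
Since the candidate is feasible for \eqref{eq:Prob2}, its cost bounds the infimum from above, i.e. $\tilde J_s \leq \tfrac{1}{2}||\tilde x||_2^2 + \tfrac{1}{2}||\tilde u||_2^2 \leq J_\infty$ for every $s\geq 1$. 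The delicate points to get right are the vanishing of the boundary terms at infinity (requiring $x(t)\to 0$ from $x,\dot x\in L^2_n$) and the repeated use of self-adjointness of $P$ together with $\dot\tau=M\tau$; everything else is bookkeeping.
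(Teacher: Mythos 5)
Your proof is correct and follows essentially the same route as the paper: the candidate $\eta_x=\pi^s(x)$, $\eta_u=\pi^s(u)$, feasibility of the set constraints via C2), verification of the relaxed equality constraint by integration by parts together with $\dot\tau=M\tau$, and the cost bound via Bessel's inequality (your norm non-increase of the orthogonal projection). The only cosmetic difference is that you verify the equality constraint by self-adjointness of the projection and direct cancellation, whereas the paper writes it out in Kronecker-product coordinates and invokes the identity $M+M\T=-\tau(0)\tau(0)\T$; these are the same computation.
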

\begin{proof}
We will denote the minimizers of \eqref{eq:Jinf} by $x$ and $u$, which are both square integrable and fulfill $\dot{x}(t)=A x(t)+B u(t)$ for all $t\in [0,\infty)$. We define $\eta_x:=\pi^s(x)\in \mathbb{R}^{ns}$, $\eta_u:=\pi^s(u)\in \mathbb{R}^{ms}$ (where notation is slightly abused to denote both the projection from $L^2_n \rightarrow \mathbb{R}^{ns}$ and the projection from $L^2_m \rightarrow \mathbb{R}^{ms}$, defined in analogy to \eqref{eq:defpi^s}, by $\pi^s$). From Assumption C2) it follows that $\eta_x \in \tilde{\mathcal{X}}^s$ and $\eta_u \in \tilde{\mathcal{U}}^s$. We will argue that $\eta_x$ and $\eta_u$ fulfill the equality constraints in \eqref{eq:Prob2}. Therefore we rewrite the equality constraint as
\begin{equation}
\left( A \otimes I_s - I_n \otimes M\T - I_n \otimes (\tau(0) \tau(0)\T) \right) \eta_x 
+ (B \otimes I_s) \eta_u + (I_n \otimes \tau(0)) x_0 = 0,
\end{equation}
where orthonormality of the basis functions, the properties of the Kronecker product, and Assumption A2) is used. We note further that the identity
\begin{align}
\begin{split}
M=\int_{0}^{\infty} \dot{\tau} \tau\T \dt &= - \tau(0) \tau(0)\T - \int_{0}^{\infty} \tau \dot{\tau}\T \dt \\
&= - \tau(0)\tau(0)\T - M\T,
\end{split}
\end{align}
which follows from integration by parts, simplifies the previous equation to
\begin{align}
(A \otimes I_s + I_n \otimes M)\eta_x
+ (B \otimes I_s) \eta_u + (I_n \otimes \tau(0)) x_0 = 0.\label{eq:cond1}
\end{align}
Moreover, it holds that
\begin{align}
\int_{0}^{\infty} (I_n \otimes \tau) \dot{x}\dt &= -(I_n \otimes \tau(0))x_0 - (I_n \otimes M) \int_{0}^{\infty} (I_n \otimes \tau)x \dt\\
&=-(I_n \otimes \tau(0))x_0 - (I_n \otimes M) \eta_x\label{eq:cond2}\\
&=(A \otimes I_s)\eta_x + (B \otimes I_s) \eta_u, 
\label{eq:cond3}
\end{align}
where integration by parts (1st step), the definition of the projection $\pi^s$ (2nd step), and the fact that $x$ and $u$ fulfill the (linear) equations of motion exactly (3rd step) has been used. Clearly, \eqref{eq:cond2} and \eqref{eq:cond3} are equivalent to \eqref{eq:cond1} and therefore $\eta_x$ and $\eta_u$ are feasible candidates for \eqref{eq:Prob2}. Bessel's inequality, \cite[p. 51]{CourantHilberMethods}, implies that 
\begin{equation}
\frac{1}{2} |\eta_x|_2^2 + \frac{1}{2} |\eta_u|_2^2 = \frac{1}{2} |\pi^s(x)|_2^2 +  \frac{1}{2} |\pi^s(u)|_2^2 \leq \frac{1}{2} ||x||_2^2 + \frac{1}{2} ||u||_2^2,
\end{equation}
where the Euclidean norm is denoted by $|\cdot|_2$.
Therefore $\eta_x$ and $\eta_u$ are feasible candidates achieving a cost that is smaller than $J_\infty$, and hence, $\tilde{J}_s \leq J_\infty$ for all $s \geq s_0$. \hfill \qed
\end{proof}

In order to establish that the sequence $\tilde{J}_s$ is monotonically increasing, we will work with the dual problem. It turns out that the finite dimensional representation of the adjoint equations are fulfilled exactly by \eqref{eq:Prob2}. We will use this fact to construct feasible candidates for the optimization over $s+1$ basis functions.
\begin{proposition}\label{Prop:Bincr}
Let Assumptions C0), C1), and C2) be fulfilled. Then $\tilde{J}_s$ is monotonically increasing and bounded above by $J_\infty$ for all $s\geq 1$.
\end{proposition}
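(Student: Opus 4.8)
Since Proposition~\ref{Prop:BLB} already gives $\tilde{J}_s \leq J_\infty$ for all $s$ under Assumptions C0) and C2), the remaining task is to prove that $\tilde{J}_s$ is monotonically increasing; convergence is then immediate, because a monotone sequence that is bounded above converges. The plan is to argue as in Proposition~\ref{Prop:AStrongConv}, but in the opposite direction: instead of \emph{including} an optimizer of the $s$-problem into the $(s+1)$-problem, I would take an optimizer of the richer $(s+1)$-problem and \emph{project} it to a candidate for the $s$-problem. Let $(\eta_{x_{s+1}}, \eta_{u_{s+1}})$ be a minimizer of \eqref{eq:Prob2} over $s+1$ basis functions, which exists by Assumption C0) and the feasibility established in the proof of Proposition~\ref{Prop:BLB}. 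I would then verify that $(\pi_s \eta_{x_{s+1}}, \pi_s \eta_{u_{s+1}})$, with $\pi_s$ as in \eqref{eq:defpi_s}, is feasible for \eqref{eq:Prob2} over $s$ basis functions and has cost at most $\tilde{J}_{s+1}$, whence $\tilde{J}_s \leq \tilde{J}_{s+1}$.

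Two of the three required properties are routine. The set memberships $\pi_s \eta_{x_{s+1}} \in \tilde{\mathcal{X}}^s$ and $\pi_s \eta_{u_{s+1}} \in \tilde{\mathcal{U}}^s$ are exactly Assumption C1). For the cost, orthonormality of the basis functions makes the objective equal to $\tfrac{1}{2}|\eta_x|_2^2 + \tfrac{1}{2}|\eta_u|_2^2$ and makes $\pi_s$ act as $I_n \otimes [\,I_s\ \ 0\,]$, i.e.\ it merely discards the coefficients of $\tau_{s+1}$ and is therefore an orthogonal projection with $|\pi_s \eta|_2 \leq |\eta|_2$; hence the projected candidate has cost no larger than $\tilde{J}_{s+1}$.

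The crux is preservation of the equality constraint, for which I would use its simplified form \eqref{eq:cond1}. Applying $I_n \otimes [\,I_s\ \ 0\,]$ to the $(s+1)$-dimensional instance of \eqref{eq:cond1} and invoking the mixed-product rule for the Kronecker product, the terms carrying $A$, $B$, and $x_0$ collapse exactly onto their $s$-dimensional counterparts. The only delicate term is $I_n \otimes M$, whose correct reduction amounts to the block identity $[\,I_s\ \ 0\,] M_{s+1} = M_s [\,I_s\ \ 0\,]$, where $M_s$ and $M_{s+1}$ are the matrices furnished by Assumption A2) for $\tau^s$ and $\tau^{s+1}$. The equality of the top-left $s \times s$ blocks is automatic, since $(M_{s+1})_{kl} = \int_{0}^{\infty} \dot{\tau}_k \tau_l \dt$ for $k,l \leq s$; and the first $s$ entries of the last column of $M_{s+1}$ vanish, because Assumption A2) applied to $\tau^s$ gives $\dot{\tau}_k \in \mathrm{span}(\tau_1,\dots,\tau_s)$ for $k \leq s$, so that $\int_{0}^{\infty} \dot{\tau}_k \tau_{s+1} \dt = 0$ by orthonormality. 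Thus projecting the $(s+1)$-instance of \eqref{eq:cond1} reproduces its $s$-instance exactly, and the projected candidate is feasible.

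The main obstacle is precisely this last step: ensuring that projection does not destroy the equality constraint, which rests on the nested block structure $[\,I_s\ \ 0\,] M_{s+1} = M_s [\,I_s\ \ 0\,]$ and therefore on Assumption A2) being available for the truncation $\tau^s$, not merely for $\tau^{s+1}$. This nested/exactness structure is the same phenomenon highlighted in the discussion before the proposition, where it is read off the adjoint equations via duality; the primal projection above uses it directly. Once feasibility and the cost bound are in hand, $\tilde{J}_s \leq \tilde{J}_{s+1}$ follows, and monotonicity together with the upper bound $\tilde{J}_s \leq J_\infty$ from Proposition~\ref{Prop:BLB} yields convergence as $s \to \infty$.
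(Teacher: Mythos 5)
Your proof is correct, but it takes a genuinely different route from the paper. The paper works in the \emph{dual}: it derives the Lagrangian dual of \eqref{eq:Prob2}, invokes strong duality, rewrites the dual objective via the convex conjugates $I_{\varphi_s}^*$, $I_{\psi_s}^*$, and then shows that a dual maximizer for $s$ basis functions, padded by the inclusion $i_s$, remains feasible for the $(s+1)$-dual (because the adjoint equation is satisfied pointwise) and attains a larger dual value by the order-reversing property of conjugation together with Assumption C1). You instead argue purely in the \emph{primal}, truncating an $(s+1)$-minimizer via $\pi_s$ from \eqref{eq:defpi_s} and checking feasibility for the $s$-problem directly through the coordinate form \eqref{eq:cond1}. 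Your three verifications are all sound: C1) gives the set memberships, orthonormality makes $\pi_s$ the coordinate truncation $I_n \otimes [\,I_s\ \ 0\,]$ so the cost can only decrease, and the block identity $[\,I_s\ \ 0\,] M_{s+1} = M_s [\,I_s\ \ 0\,]$ holds because $M_{s+1} = \int_0^\infty \dot{\tau}^{s+1} (\tau^{s+1})\T \dt$ and $\dot{\tau}_k \in \mathrm{span}(\tau_1,\dots,\tau_s)$ for $k \leq s$. You correctly identify that this last step needs Assumption A2) to hold for the truncated family $\tau^s$ and not only for $\tau^{s+1}$; this is indeed an implicit standing assumption of the paper (every problem instance $J_s$, $\tilde{J}_s$ is defined using it, and the paper's own dual argument also relies on A2) at level $s$ to pass from the Galerkin-projected adjoint constraint to its pointwise form). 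What each approach buys: yours is shorter and more elementary, avoiding strong duality, attainment of the dual supremum, and the conjugate-function machinery entirely (and it does not even require the infimum of the $(s+1)$-problem to be attained, since near-minimizers suffice); the paper's dual route is heavier but exposes the adjoint-equation structure of \eqref{eq:Prob2} and the symmetry with the inclusion-based monotonicity argument for \eqref{eq:Prob1}, which is conceptually useful elsewhere in the development.
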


\begin{proof}
We first derive the dual of \eqref{eq:Prob2}. We use Lagrange duality to rewrite \eqref{eq:Prob2} as
\begin{align}
\begin{split}
\tilde{J}_s = &\inf_{\eta_x, \eta_u} \sup_{\eta_p} \int_{0}^{\infty}  \frac{1}{2} \tilde{x}\T \tilde{x} + \frac{1}{2} \tilde{u}\T \tilde{u} + \tilde{p}\T \left( A \tilde{x} + B \tilde{u} - \dot{\tilde x}\right) \dt - \tilde{p}(0)\T (\tilde{x}(0)-x_0),\\
& \quad \eta_x \in \tilde{\mathcal{X}}^s, \eta_u \in \tilde{\mathcal{U}}^s.
\end{split}
\end{align}
From Assumptions C0) and C1) we can infer that $\tilde{J}_s \leq J_{\infty}$ for all $s\geq 1$ by Prop.~\ref{Prop:BLB}. The fact that $0\leq \tilde{J}_s \leq J_{\infty}$ implies further that the infimum in \eqref{eq:Prob2} is attained, and that the set of minimizers is nonempty due to Assumption C0) ($\tilde{\mathcal{X}}^s$ and $\tilde{\mathcal{U}}^s$ are closed). According to \cite[p.~503, Thm.~11.39]{RockafellarConvex} strong duality holds, and the infimum and supremum can be interchanged, which yields
\begin{align}
\begin{split}
\tilde{J}_s = &\sup_{\eta_p} \inf_{\eta_x, \eta_u} \int_{0}^{\infty}  \frac{1}{2} \tilde{x}\T \tilde{x} + \frac{1}{2} \tilde{u}\T \tilde{u} + \tilde{p}\T \left( A \tilde{x} + B \tilde{u} - \dot{\tilde x}\right) \dt - \tilde{p}(0)\T (\tilde{x}(0)-x_0),\\
& \quad \eta_x \in \tilde{\mathcal{X}}^s, \eta_u \in \tilde{\mathcal{U}}^s.
\end{split} \label{eq:minmax}
\end{align}
The minimization over $\eta_x$ and $\eta_u$ is a convex problem and can be rewritten in terms of convex-conjugate functions, \cite[p. 473]{RockafellarConvex}. To that extent, we apply first integration by parts on the term $\tilde{p}\T \dot{\tilde{x}}$, resulting in
\begin{align}
\begin{split}
\tilde{J}_s = &\sup_{\eta_p} \inf_{\eta_x, \eta_u} \int_{0}^{\infty} \frac{1}{2} \tilde{x}\T \tilde{x} + \tilde{x}\T \left( A\T \tilde{p} + \dot{\tilde{p}} \right) + \frac{1}{2} \tilde{u}\T \tilde{u} + \tilde{u}\T B\T \tilde{p}~\dt + \tilde{p}(0)\T x_0,\\
& \quad \eta_x \in \tilde{\mathcal{X}}^s, \eta_u \in \tilde{\mathcal{U}}^s.
\end{split} \label{eq:minmax2}
\end{align}
By defining $\tilde{v}(t,\eta_v):=(I_n \otimes \tau(t))\T \eta_v$ such that 
\begin{equation*}
\int_{0}^{\infty} \delta \tilde{\lambda}\T (\tilde{v} + A\T \tilde{p} + \dot{\tilde{p}} )~\dt=0,\quad \delta \tilde{\lambda}:=(I_n \otimes \tau)\T \delta \eta_\lambda,
\end{equation*}
for all $\delta \eta_\lambda\in \mathbb{R}^{ns}$, which is equivalent to $-\tilde{v}:=A\T \tilde{p} + \dot{\tilde{p}}$ as shown in \cite{parametrizedMPC}, the minimization over $\tilde{x}$ can be interpreted as a (extended real-valued) function of $\tilde{v}$, i.e.
\begin{align}
\inf_{\eta_x \in \tilde{\mathcal{X}}^s}\int_{0}^{\infty} \frac{1}{2} \tilde{x}\T \tilde{x} - \tilde{x}\T \tilde{v}~\dt &=-\sup_{\eta_x \in \tilde{\mathcal{X}}^s}\int_{0}^{\infty} \tilde{x}\T \tilde{v}-\frac{1}{2} \tilde{x}\T \tilde{x}~\dt\\
&=-\sup_{\pi^s(\tilde{x})\in \tilde{\mathcal{X}}^s}\int_{0}^{\infty} \tilde{x}\T \tilde{v}-\frac{1}{2} \tilde{x}\T \tilde{x} ~\dt=:-I_{\varphi_s}^*( \tilde{v} ). \label{eq:defVarphiConj}
\end{align}
Note that $I_{\varphi_s}^*$ maps from $L^2_n$ to the extended real line and is well-defined.
In a similar way, we can regard the minimization over $\eta_u$ as (extended real-valued) function of $\tilde{p}$,
\begin{align}
\inf_{\pi^s(\tilde{u})\in \tilde{\mathcal{U}}^s} \int_{0}^{\infty} \frac{1}{2} \tilde{u}\T \tilde{u}  + \tilde{u}\T B\T \tilde{p}~\dt =: - I_{\psi_s}^*(-B\T\tilde{p}),
\end{align}
where in this case $\pi^s$ denotes the projection $L^2_m \rightarrow \mathbb{R}^{sm}$ defined in analogy to \eqref{eq:defpi^s} (with a slight abuse of notation). Thus, \eqref{eq:minmax2} is reformulated as
\begin{align}
\begin{split}
&\tilde{J}_s = \sup_{\eta_p \in \mathbb{R}^{ns}} -I_{\varphi_s}^*(\tilde{v}) -I_{\psi_s}^*(-B\T \tilde{p}) + \tilde{p}(0)\T x_0,\\
&\text{s.t.} \int_{0}^{\infty} (I_n \otimes \tau) \left( \dot{\tilde{p}}+A\T \tilde{p} + \tilde{v} \right) \dt=0.
\end{split}\label{eq:finiteDual}
\end{align}
The functions $\tilde{v}$ and $\tilde{p}$ satisfy the adjoint equations exactly and it holds that $\lim_{t\rightarrow \infty} \tilde{p}(t)=0$ by Assumption A1).

Let $\eta_v\in \mathbb{R}^{ns}$ and $\eta_p\in \mathbb{R}^{ns}$, with corresponding trajectories $\tilde{v}^s(t,\eta_v)$ and $\tilde{p}^s(t,\eta_p)$, be maximizers of \eqref{eq:finiteDual}. The set of maximizers is non-empty due to the fact that we optimize over $\mathbb{R}^{ns}$ and $0\leq \tilde{J}_s \leq J_\infty$ holds. The equality constraint implies that the adjoint equation $\dot{\tilde{p}}^s(t)+A\T \tilde{p}^s(t) + \tilde{v}^s(t)=0$ is fulfilled for all times $t\in [0,\infty)$, see \cite{parametrizedMPC}, and thus, the adjoint equation is likewise fulfilled by the augmented trajectories $\tilde{v}^{s+1}(t,i_s(\eta_v))$ and $\tilde{p}^{s+1}(t,i_s(\eta_p))$. Hence, $\tilde{v}^{s+1}(t,i_s(\eta_v))$ and $\tilde{p}^{s+1}(t,i_s(\eta_p))$ are feasible candidates to the optimization \eqref{eq:finiteDual} over $s+1$ basis functions, and it holds that $\tilde{p}^{s+1}(0,i_s(\eta_p))=\tilde{p}^s(0,\eta_p)$. It remains to establish the relation between $I_{\varphi_s}^*$ and $I_{\varphi_{s+1}}^*$, as well as $I_{\psi_{s+1}}^*$ and $I_{\psi_s}^*$, which is done via the order reversing property of the convex-conjugation. Therefore the function $I_{\varphi_s}^*$ is regarded as the conjugate of
\begin{equation}
I_{\varphi_s}(x):= \begin{cases} \frac{1}{2} ||\tilde{x}^s(t, \pi^s(x))||_2^2 & \pi^s(x)\in \tilde{\mathcal{X}}^s, \\
\infty  & \text{otherwise}. \end{cases} \label{eq:defvarphi}
\end{equation}
We note that Assumption C1) implies $I_{\varphi_{s+1}}({x}) \geq I_{\varphi_{s}}({x})$ for all ${x} \in L^2_n$. This is due to the fact that any square integrable function $x$ with $\pi^{s+1}({x}) \in \tilde{\mathcal{X}}^{s+1}$ automatically fulfills $\pi^s({x}) \in \tilde{\mathcal{X}}^s$, since $\pi_s(\tilde{\mathcal{X}}^{s+1})$ is contained in $\tilde{\mathcal{X}}^s$ by Assumption C1), and $|\pi^{s+1}(x)|_2^2 \geq |\pi^{s}(x)|_2^2$ holds for all $x \in L^2_n$. The convex-conjugation reverses ordering, which implies
\begin{equation}
I_{\varphi_{s+1}}^*({v}) \leq I_{\varphi_s}^*({v})
\end{equation}
for all ${v}\in L^2_n$, see \cite[Prop.~4.4.1, p.~171]{convexFunctions}. The same reasoning applies to $I_{\psi_s}^*$, which is the convex-conjugate of
\begin{equation}
I_{\psi_s}(u):= \begin{cases} \frac{1}{2} ||\tilde{u}^s(t,\pi^s(u))||_2^2 & \pi^s(u) \in \tilde{\mathcal{U}}^s, \\
\infty & \text{otherwise}. \end{cases} \label{eq:defpsi}
\end{equation}
This leads to the conclusion that 
\begin{equation}
-I_{\varphi_s}^*({v})-I_{\psi_s}^*(-B\T {p}) \leq -I_{\varphi_{s+1}}^*({v}) - I_{\psi_{s+1}}^*(-B\T {p}), \label{eq:dualSol}
\end{equation}
for any ${v}, {p} \in L^2_n$. Hence, we have that $\tilde{v}^{s+1}(t,i_s(\eta_v))$ and $\tilde{p}^{s+1}(t,i_s(\eta_p))$ are feasible candidates to the optimization problem over $s+1$ basis functions with higher corresponding cost and therefore $\tilde{J}_{s+1}\geq \tilde{J}_s$.

\hfill \qed
\end{proof}

Next, we would like to establish that $\lim_{s\rightarrow \infty} \tilde{J}_s =\lim_{s\rightarrow \infty} J_s$. In order to do so, we need the following assumptions:
\begin{itemize}
\item[D0)] $\limsup_{s\rightarrow \infty} \tilde{\mathcal{X}}_s \subset \liminf_{s \rightarrow \infty} \mathcal{X}^s$
\item[D1)] The basis functions $\tau_i$, $i=1,2,\dots$, are dense in $C_0^{\infty}$ (in the topology of uniform convergence).\footnote{The set of smooth functions with compact support mapping from $[0,\infty)$ to $\mathbb{R}$ is denoted by $C_0^{\infty}$.}
\end{itemize}

\begin{proposition}\label{Prop:Conv}
Let $N_0$ be such that $J_{N_0}$ is finite and let Assumptions B0)-D1) be fulfilled. Then, $\lim_{s\rightarrow  \infty} \tilde{J}_{s} = \lim_{s\rightarrow \infty} J_{s}$ holds.
\end{proposition}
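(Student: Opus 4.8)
The plan is to combine the monotone bounds already established with a $\Gamma$/Mosco-type argument. From Prop.~\ref{Prop:AMonDec} and Prop.~\ref{Prop:Bincr} the sequences $J_s$ and $\tilde{J}_s$ converge and satisfy $\tilde{J}_s \le J_\infty \le J_s$ for all admissible $s$. Writing $\underline{J} := \lim_{s\to\infty}\tilde{J}_s$ and $\overline{J} := \lim_{s\to\infty} J_s$, passing to the limit gives the sandwich $\underline{J} \le J_\infty \le \overline{J}$, and in particular $\underline{J} \le \overline{J}$. Hence it suffices to prove the two tightness statements $\underline{J} \ge J_\infty$ and $\overline{J} \le J_\infty$; either one, together with the sandwich, pins the corresponding limit to $J_\infty$, and the two together yield $\underline{J} = \overline{J} = J_\infty$.

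For the lower bound I would show $\underline{J} \ge J_\infty$ by a weak-compactness argument on the minimizers of \eqref{eq:Prob2}. Denoting these minimizers by $(\tilde{x}^s,\tilde{u}^s)$, the identity $\tfrac12|\eta_x|_2^2+\tfrac12|\eta_u|_2^2=\tilde{J}_s\le J_\infty$ bounds them uniformly in $L^2_n\times L^2_m$, so along a subsequence $\tilde{x}^s \rightharpoonup x^\dagger$ and $\tilde{u}^s \rightharpoonup u^\dagger$ weakly. The first task is to verify that $(x^\dagger,u^\dagger)$ is feasible for \eqref{eq:Jinf}. The dynamics are recovered from the Galerkin equality constraint of \eqref{eq:Prob2}: testing the relation $\dot{x} = Ax+Bu$ against the growing family $\tau_1,\tau_2,\dots$ and invoking the density Assumption D1), the finite-dimensional relations pass to the limit and force $\dot{x}^\dagger = A x^\dagger + B u^\dagger$ with $x^\dagger(0)=x_0$ in the distributional, hence classical, sense. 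The state and input constraints are recovered from Assumption D0): the optimal parameters lie in $\tilde{\mathcal{X}}^s\times\tilde{\mathcal{U}}^s$, so their limit lies in $\limsup_s \tilde{\mathcal{X}}^s \subset \liminf_s \mathcal{X}^s$, and Assumption B2) then forces $x^\dagger(t)\in\mathcal{X}$, $u^\dagger(t)\in\mathcal{U}$. With feasibility in hand, weak lower semicontinuity of the $L^2$-norm gives $\tfrac12\|x^\dagger\|_2^2+\tfrac12\|u^\dagger\|_2^2 \le \liminf_s \tilde{J}_s = \underline{J}$, while feasibility gives $\tfrac12\|x^\dagger\|_2^2+\tfrac12\|u^\dagger\|_2^2 \ge J_\infty$, and hence $\underline{J} \ge J_\infty$.

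For the upper bound I would show $\overline{J} \le J_\infty$ by exhibiting a recovery sequence for \eqref{eq:Prob1}. Starting from the unique minimizer $(x,u)$ of \eqref{eq:Jinf}, Assumption D1) together with the structure $\dot{\tau}=M\tau$ of Assumption A2) — which makes the span of $\tau_1,\dots,\tau_s$ invariant under differentiation — should allow one to approximate $u$ in $L^2_m$ by inputs in the span whose exact-dynamics response again lies in the span, so that the resulting pair $(\hat{x}^s,\hat{u}^s)$ satisfies the equality constraint of \eqref{eq:Prob1} with $\hat{x}^s(0)=x_0$. The remaining point is constraint feasibility, $\hat{\eta}_x\in\mathcal{X}^s$: here Assumption C2) places the projections $\pi^s(x)$ in $\tilde{\mathcal{X}}^s$, and Assumption D0) transfers this membership into $\mathcal{X}^s$ in the limit, so that the recovery sequence is eventually admissible for \eqref{eq:Prob1}. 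Since its cost converges to $\tfrac12\|x\|_2^2+\tfrac12\|u\|_2^2=J_\infty$, one obtains $J_s \le \tfrac12\|\hat{x}^s\|_2^2+\tfrac12\|\hat{u}^s\|_2^2 \to J_\infty$ and thus $\overline{J} \le J_\infty$. Combining the two bounds with the sandwich closes the proof.

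I expect the main obstacle to be the interaction between the mode of convergence and the constraints. Unlike the minimizers of \eqref{eq:Prob1}, which converge strongly by Prop.~\ref{Prop:AStrongConv}, the minimizers of \eqref{eq:Prob2} are only known a priori to converge weakly, whereas Assumption D0) is a statement about norm limits of parameters; reconciling the two — presumably via the convexity in Assumption C0) and a Mazur-type argument, or by first upgrading to strong convergence — is the delicate step in the lower-bound direction. The companion difficulty in the upper-bound direction is the construction of the recovery sequence entirely within the exact-dynamics-and-$\mathcal{X}^s$ feasible set of \eqref{eq:Prob1}, where the tightness of the inner approximation $\mathcal{X}^s$, as opposed to mere pointwise membership in $\mathcal{X}$, must be secured through Assumptions D0) and C2).
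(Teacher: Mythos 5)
Your overall decomposition --- prove the two tightness statements $\underline{J}\ge J_\infty$ and $\overline{J}\le J_\infty$ separately --- is not the paper's, and the difference matters. The paper proves a single \emph{cross} inequality, $\lim_{s}\tilde{J}_s \ge \lim_{s}J_s$, by showing that the weak subsequential limit $(\tilde{x},\tilde{u})$ of the minimizers of \eqref{eq:Prob2} is feasible for \eqref{eq:Prob1} in the limit: Assumption D0) places the limiting parameters in $\liminf_s\mathcal{X}^s$ (not merely, via B2), pointwise in $\mathcal{X}$), and Assumption D1) is used to pass the Galerkin equality constraint of \eqref{eq:Prob2} to the exact dynamics $\dot{\tilde{x}}=A\tilde{x}+B\tilde{u}$, $\tilde{x}(0)=x_0$. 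Together with the sandwich $\tilde{J}_s\le J_\infty\le J_s$ this closes the gap from the inside and no recovery sequence is ever needed. Your lower-bound half is essentially this argument, except that by targeting feasibility for \eqref{eq:Jinf} instead of for the limit of \eqref{eq:Prob1} you only obtain $\underline{J}\ge J_\infty$, which leaves $\overline{J}$ unpinned and forces you to supply the upper-bound half. (Within that half, the limit interchange you gloss over --- testing against $\delta\tilde{p}_k\to\delta p$ while $s(q)\to\infty$ --- is where the paper spends most of its effort: the uniform bound on $|\tilde{x}^{s(q)}(0)-x_0|_2$ obtained from the equality constraint, and the estimate $\int_0^\infty|\tilde{v}|_2\,\dt<\infty$ from the exponential decay guaranteed by A2), are what justify \eqref{eq:interchanged} and \eqref{eq:interclim1}.)

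The upper-bound half as sketched has a genuine gap. First, Assumption A2) makes the span of $\tau_1,\dots,\tau_s$ invariant under differentiation, but \emph{not} under the controlled flow: if $\hat{u}^s$ lies in the span, the exact response $e^{At}x_0+\int_0^te^{A(t-\hat{t})}B\hat{u}^s(\hat{t})\,\diff\hat{t}$ involves the plant modes $e^{At}$ and need not lie in the span at all. The pairs $(\eta_x,\eta_u)$ satisfying the dynamics exactly within the span form the proper affine subspace cut out by the Galerkin constraint of \eqref{eq:Prob1}, and you give no reason why the $J_\infty$-optimal input can be $L^2$-approximated while staying on that subspace --- this is precisely the nontrivial content of the proposition, so assuming it is circular. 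Second, the constraint step fails directionally: D0) says $\limsup_s\tilde{\mathcal{X}}_s\subset\liminf_s\mathcal{X}^s$, i.e.\ it transfers \emph{limits of sequences of parameters} from the tilde sets into the non-tilde sets; it does not place $\pi^s(x)$ (or your $\hat{\eta}_x$) in $\mathcal{X}^s$ at any finite $s$, which is what feasibility for \eqref{eq:Prob1} requires. I recommend restructuring along the paper's lines: use D0) on the limiting parameters of the \eqref{eq:Prob2} minimizers to get feasibility for the limit of \eqref{eq:Prob1}, conclude $\lim_s\tilde{J}_s\ge\lim_sJ_s$ by weak lower semicontinuity, and let the sandwich do the rest.
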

\begin{proof}
By Prop.~\ref{Prop:AMonDec} and Prop.~\ref{Prop:AStrongConv} it follows that $J_s$ is monotonically decreasing, $\lim_{s \rightarrow \infty} J_s$ is finite, and that the corresponding optimizers converge. From Prop.~\ref{Prop:BLB} and Prop.~\ref{Prop:Bincr}, we can infer that $\tilde{J}_s$ is monotonically increasing and bounded above by $J_\infty$ for all $s\geq 1$. This implies further that the sequence of minimizers of \eqref{eq:Prob2} is bounded in the $L^2$-sense. Due to the fact that $L^2_1$ (and likewise $L^2_n \times L^2_m$) is a Hilbert space, there exists a subsequence $s(q)$ such that the corresponding minimizer of \eqref{eq:Prob2} converge weakly, i.e. $\tilde{x}^{s(q)} \rightharpoonup \tilde{x}$, $\tilde{u}^{s(q)} \rightharpoonup \tilde{u}$, \cite[p. 163]{convey1990}.

We pick any $\delta p:=(\delta p_1, \dots, \delta p_n)\T$, with $\delta p_i \in C_0^{\infty}$, $i=1,2,\dots,n$, and $\delta p(0)=0$, and choose a sequence $\delta \tilde{p}_k = \sum_{i=1}^{k} \tau_i \delta\eta_{p_i}$, $\delta \eta_{p_i} \in \mathbb{R}^{n}$, converging uniformly to $\delta p$. According to Assumption D1) such a sequence exists. Hence for any $\epsilon > 0$ we can find an integer $N_0$ large enough, such that
\begin{equation*}
|\delta \tilde{p}_k(0)\T (\tilde{x}^{s(q)}(0)-x_0)| \leq |\tilde{x}^{s(q)}(0)-x_0|_2 \epsilon
\end{equation*}
holds for all $k\geq N_0$. We claim that $|\tilde{x}^{s(q)}(0)-x_0|_2$ is uniformly bounded. This can be seen by right multiplying the equality constraint of \eqref{eq:Prob2} by $\eta_x\T$, resulting in
\begin{equation}
\int_{0}^{\infty} {\tilde{x}^{s(q) \TT}} (A \tilde{x}^{s(q)} + B \tilde{u}^{s(q)} - \dot{\tilde{x}}^{s(q)})\dt
- \tilde{x}^{s(q)}(0)\T (\tilde{x}^{s(q)}(0)-x_0)=0,
\end{equation} 
which can be further simplified to
\begin{equation}
\frac{1}{2} |x_0|_2^2+\int_{0}^{\infty} {\tilde{x}^{s(q)\TT}} (A \tilde{x}^{s(q)} + B \tilde{u}^{s(q)} ) \dt
= \frac{1}{2}|\tilde{x}^{s(q)}(0)-x_0|_2^2,
\end{equation}
using $\lim_{t\rightarrow \infty} \tilde{x}^{s(q)}(t)=0$ (by Assumption A1)) and completing the squares.
From the fact that $\tilde{J}_{s(q)} \leq J_\infty$ for all $q$, it follows that $\tilde{x}^{s(q)}$ and $\tilde{u}^{s(q)}$ are bounded in $L^2_n$, respectively $L^2_m$. As a consequence, $|\tilde{x}^{s(q)}(0)-x_0|_2^2$ is uniformly bounded, as can be verified with the Cauchy-Schwarz inequality,
$\lim_{k\rightarrow \infty} \delta \tilde{p}_k(0)\T (\tilde{x}^{s(q)}(0)-x_0)$ converges uniformly, and the limits over $q$ and $k$ can be interchanged,
\begin{align}
\lim_{q\rightarrow \infty} \lim_{k\rightarrow \infty} &\delta \tilde{p}_k(0)\T (\tilde{x}^{s(q)}(0)-x_0) = \lim_{k\rightarrow \infty} \lim_{q\rightarrow \infty} \delta \tilde{p}_k(0)\T (\tilde{x}^{s(q)}(0)-x_0)=0. \label{eq:interchanged}
\end{align}
The equality constraint of \eqref{eq:Prob2} reads therefore as
\begin{align}
\lim_{k \rightarrow \infty} \lim_{q \rightarrow \infty} \int_{0}^{\infty} \delta \tilde{p}_k\T( A \tilde{x}^{s(q)} + B \tilde{u}^{s(q)} - \dot{\tilde{x}}^{s(q)}) \dt = 
\lim_{q \rightarrow \infty} \lim_{k \rightarrow \infty} \int_{0}^{\infty} \delta \tilde{p}_k\T( A \tilde{x}^{s(q)} + B \tilde{u}^{s(q)} - \dot{\tilde{x}}^{s(q)}) \dt =0,
\label{eq:tmp2}
\end{align}
where both limits agree. We will show that the limit over $k$ commutes with the integration. To that extent, we make the following claim: For any function $\tilde{v}:=\sum_{i=1}^N \tau_i \eta_{v_i}$, where $\eta_{v_i}$ are bounded vectors in $\mathbb{R}^n$ and $N$ is a positive integer, it holds that
\begin{equation}
\lim_{k\rightarrow \infty} \int_{0}^{\infty} \delta \tilde{p}_k\T \tilde{v} \dt = \int_{0}^{\infty} \delta p\T \tilde{v}\dt. \label{eq:interclim1}
\end{equation}
We will prove the claim below, but assume for now that it holds. As a consequence of Assumption A1), implying that $\dot{\tilde{x}}^{s(q)}$ is a linear combination of the basis functions, the claim results in
\begin{align}
\lim_{k \rightarrow \infty} \int_{0}^{\infty} \delta \tilde{p}_k \dot{\tilde{x}}^{s(q)}\dt&=\int_{0}^{\infty} \delta p\T \dot{\tilde{x}}^{s(q)} \dt,
\end{align}
for any integer $s(q)$. Using integration by parts (twice) and the fact that $\tilde{x}^{s(q)}$ converges weakly leads to
\begin{align}
\lim_{q\rightarrow \infty} \int_{0}^{\infty} \delta p\T \dot{\tilde{x}}^{s(q)}\dt&=\lim_{q\rightarrow \infty} -\int_{0}^{\infty} \delta \dot{p}\T \tilde{x}^{s(q)}\dt \\
&=-\int_{0}^{\infty} \delta \dot{p}\T \tilde{x} \dt\\
&=\int_{0}^{\infty} \delta p\T \dot{\tilde{x}} \dt.
\end{align}
Note that $\delta \dot{p}$ has compact support, is bounded (by continuity), and is therefore square integrable in $[0,\infty)$.
The claim implies further that \eqref{eq:tmp2} simplifies to
\begin{align}
0&=\lim_{q \rightarrow \infty} \int_{0}^{\infty} \delta p\T( A \tilde{x}^{s(q)} + B \tilde{u}^{s(q)}- \dot{\tilde{x}}^{s(q)})\dt\\
&=\int_{0}^{\infty} \delta p\T (A \tilde{x} + B \tilde{u} - \dot{\tilde{x}})\dt.
\end{align}
The same argument can be repeated for any $\delta p = (\delta p_1, \dots \delta p_n)$, $\delta p_i \in C_0^{\infty}, i=1,2,\dots,n$, vanishing at $0$, and therefore, as $s(q)\rightarrow \infty$, the equality constraint of \eqref{eq:Prob2} reads as
\begin{equation*}
0=\int_{0}^{\infty} \delta p\T (A \tilde{x} + B \tilde{u}- \dot{\tilde{x}})\dt, \quad \forall \delta p \in C_0^{\infty}.
\end{equation*}
Due to the fundamental lemma of the calculus of variations, \cite[p.~18]{YoungOC}, this is equivalent to $\dot{\tilde{x}}(t)=A\tilde{x}(t)+B\tilde{u}(t)$ for all $t\in[0,\infty)$ (almost everywhere). A similar argument based on variations that do not vanish at time $0$ ensures $\lim_{q\rightarrow \infty} \tilde{x}^{s(q)}(0)=x_0$. As a result, the equality constraint of \eqref{eq:Prob2} is equivalent to the one of \eqref{eq:Prob1} in the limit as $s(q)\rightarrow \infty$. Combined with Assumption D1), it implies that $\tilde{x}$ and $\tilde{u}$ are feasible candidates for \eqref{eq:Prob1} and therefore $\lim_{q \rightarrow \infty} \tilde{J}_{s(q)} \geq \lim_{s\rightarrow\infty} J_s$. From Prop.~\ref{Prop:AMonDec}, Prop.~\ref{Prop:BLB}, and Prop.~\ref{Prop:Bincr} it follows that $\tilde{J}_{s}$ is monotonically increasing and bounded by $J_\infty \leq J_s$ for all $s \geq N_0$, resulting in
\begin{equation}
\lim_{q \rightarrow \infty} \tilde{J}_{s(q)} = \lim_{s \rightarrow \infty} \tilde{J}_s = \lim_{s\rightarrow \infty} J_s.
\end{equation}
It remains to prove the claim. Let $\tilde{v}:= \sum_{i=1}^N \tau_i \eta_{v_i}$ where the $\eta_{v_i}$s are bounded vectors in $\mathbb{R}^n$ and $N$ is fixed. The matrix $M$ in Assumption A1) is negative definite and therefore it holds that $|\tilde{v}(t)|_2 \leq C_1 e^{-\beta t}$ for all $t\geq T_0$ for some constants $C_1>0$, $\beta>0$ and time $T_0>0$. As a result, it follows from H\"older's inequality, \cite[p.~76]{rudinRealCAnalysis}, that
\begin{align}
\Big\lvert\int_{0}^{\infty} (\delta \tilde{p}_k - \delta p)\T \tilde{v} \dt  \Big\rvert \leq \sup_{t \in [0,\infty)} |\delta \tilde{p}_k(t) - \delta p(t)|_2 \int_{0}^{\infty} |\tilde{v}|_2~\dt. \label{eq:dpconv}
\end{align}
The second term can be bounded by invoking H\"older's inequality once more,
\begin{align}
\int_{0}^{\infty} |\tilde{v}|_2~\dt &\leq \int_{0}^{T_0} |\tilde{v}|_2~\dt + \int_{T_0}^{\infty} C_1 e^{-\beta t} \dt 
\leq T_0 ||\tilde{v}||_2 + \frac{C_1}{\beta} e^{-\beta T_0}.
\end{align}
Hence, the right-hand side of \eqref{eq:dpconv} converges to zero due to the uniform convergence of the $\delta \tilde{p}_k$ to $\delta p$ as $k \rightarrow \infty$. This proves the claim. \hfill \qed
\end{proof}
\section{Conclusion}
We introduced two different approximations to a class of infinite-horizon optimal control problems encountered in MPC. The approximations bound the optimal cost of the underlying problem from above and below, and their optimal costs converge as the number of basis functions tends to infinity. Under favorable circumstances, the resulting input trajectories of the first approximation are found to approximate the optimal input of the underlying infinite dimensional problem arbitrarily accurately, and the corresponding optimal costs converge to the optimal cost of the underlying infinite dimensional problem. The second approximation yields a lower bound on the cost of the underlying optimal control problem, and can therefore be used to quantify the approximation quality of both approximations.

\section*{Acknowledgment}
The first author would like to thank Jonas L\"uhrmann for a fruitful discussion regarding the proof of Prop.~\ref{Prop:Conv}.

\bibliography{literature}
\bibliographystyle{abbrv}

\end{document}